\newcommand{\osc}{{\rm osc}}
\newtheorem{theorem}{Theorem}[section]
\newtheorem{proposition}[theorem]{Proposition}
\declaretheorem[style=remark,qed=$\vartriangle$,sibling=theorem]{remark}
\numberwithin{equation}{section}
\newcommand{\eps}{\varepsilon}
\newcommand{\R}{\mathbb R}
\newcommand{\N}{\mathbb N}
\newcommand{\cF}{\mathcal F}
\newcommand{\cE}{\mathcal E}
\newcommand{\cL}{\mathcal L}
\newcommand{\cS}{\mathcal S}
\newcommand{\Lis}{\cL\mathrm{is}}
\newcommand{\identity}{\mathrm{Id}}
\DeclareMathOperator{\ran}{ran}
\DeclareMathOperator*{\argmin}{argmin}
\DeclareMathOperator{\divv}{div}
\newcommand{\nrm}{| \! | \! |}
\newcommand{\new}[1]{{\color{black}{#1}}}
\newcommand{\be}{\begin{equation}}
\newcommand{\ee}{\end{equation}}
\newcommand{\tria}{{\mathcal T}}
\newcommand{\RT}{\mathit{RT}}
\newcommand{\uumlaut}{{\"u}}
\newcommand{\oumlaut}{{\"o}}
\par\begin{samepage}%
\newcounter{ccondition}
\renewcommand{\theequation}{\temp}%
\newcounter{mylistcounter}
\renewcommand{\themylistcounter}{(\roman{mylistcounter})}
\newenvironment{mylist}{
\begin{list}{\themylistcounter.}{\usecounter{mylistcounter}
\setlength{\labelwidth}{-\smallskipamount}
\setlength{\labelsep}{\medskipamount}
\setlength{\topsep}{\smallskipamount}
\setlength{\itemsep}{\smallskipamount}
\setlength{\itemindent}{0cm}
\setlength{\leftmargin}{0cm}}}
{\end{list}}
\title[Minimal residual methods with inhomogeneous boundary conditions]{A convenient inclusion of inhomogeneous boundary conditions in minimal residual methods}
\date{\today}
\author{Rob Stevenson}
\address{Korteweg-de Vries (KdV) Institute for Mathematics, University of Amsterdam, P.O. Box 94248, 1090 GE Amsterdam, The Netherlands.}
\email{rob.p.stevenson@gmail.com}
\thanks{This research has been supported by the NSF Grant DMS ID 1720297.}
\subjclass[2020]{
35B35, % Stability in context of PDEs [See also 34Dxx, 37B25, 37C20, 37C75, 37F15, 37J25, 37K45, 37L15, 49K40, 58K25, 93Dxx] 
35B45, % A priori estimates in context of PDEs
65N30, %Finite element, Rayleigh-Ritz and Galerkin methods for boundary value problems involving PDEs
}
\keywords{Least squares methods, inhomogeneous boundary conditions, quasi-optimal approximation, a posteriori error estimator}
\begin{document}

\begin{abstract} Inhomogeneous essential boundary conditions can be appended to a well-posed PDE to lead to a combined variational formulation.
The domain of the corresponding operator is a Sobolev space on the domain $\Omega$ on which the PDE is posed, whereas the codomain is a Cartesian product of spaces, among them fractional Sobolev spaces of functions on $\partial\Omega$. In this paper, easily implementable minimal residual discretizations are constructed which yield quasi-optimal approximation from the employed trial space, in which the evaluation of fractional Sobolev norms is fully avoided.
\end{abstract}
\maketitle

\section{Introduction} 
\subsection{MINRES methods for handling inhomogeneous (essential) boundary conditions}
The possibility to handle inhomogeneous boundary conditions when solving PDEs is often mentioned as an advantage of minimal residual (MINRES) discretisations (e.g.~\cite[Ch.~12]{23.5}).
In most cases, however, it is not so clear how this can lead to satisfactory results.

Considering linear elliptic PDEs of 2nd order, one option is to write the boundary value problem in an ultra-weak first order system variational formulation, which renders all boundary conditions natural.
The resulting `practical' MINRES method (\cite[\S4.4\&5]{204.19}) yields a quasi-best approximation from the trial space to all variables w.r.t.~$L_2$-norms.

If one is interested in approximation w.r.t.~other norms, then one can resort to first order system weak or mild-weak variational formulations, or to the standard second order variational formulation.
In those cases Dirichlet, or Dirichlet and Neumann boundary conditions are essential ones. 
In the case that homogeneous versions of those boundary conditions lead to a well-posed variational problem, inhomogeneous ones can be appended by enforcing them weakly by introducing additional test spaces of functions defined on the boundary $\partial\Omega$ of the domain $\Omega$ on which the PDE is posed. 
Then the combined variational formulation of the PDE and the boundary conditions can be shown to be well-posed.
Indeed, for some Hilbert spaces $X$, $Y_1$ and $Y_2$, let $F\colon X\rightarrow Y_1'$ and $T \colon X \rightarrow Y_2'$ be bounded, and with $X_0:=\ker T$, $F\colon X_0 \rightarrow Y_1'$ be boundedly invertible. 
In applications, $F$ and $T$ are operators corresponding to a weak imposition of the PDE and the boundary condition(s), and so $X$ and $Y_1$ are spaces of functions on $\Omega$, and $Y_2$ is a (product of) space(s) of functions on $\partial\Omega$.
Then assuming $T$ is surjective, $G:=(F,T)\colon X \rightarrow Y':=Y_1' \times Y_2'$ is boundedly invertible (\cite[Lemma 2.7]{75.28}).

Given a finite dimensional `trial' subspace $X^\delta \subset X$ (`$\delta$' stands for `discrete'), and a given forcing term and essential boundary data, 
the solution $u \in X$ of the well-posed problem can be approximated by 
the residual minimizer $u^\delta \in X^\delta$ in the norm of $Y'$.
A complication is that some coordinate spaces of $Y'$ will be negative- or, in particular for function spaces on $\partial\Omega$, fractional-order Sobolev spaces, all with norms that cannot be evaluated.
Several solutions for this problem have been proposed, e.g.~\cite{35.835} for the case of negative order Sobolev norms on $\Omega$, and \cite{249.06} for fractional order Sobolev norms on $\partial\Omega$.
The resulting MINRES methods, however, are not guaranteed to produce approximate solutions that are quasi-best.

Viewing all negative- or fractional-order Sobolev norms as dual norms, so writing e.g.~ $H^{\frac12}(\partial\Omega)$ as $H^{-\frac12}(\partial\Omega)'$, our approach in \cite{204.19} is to discretize dual norms by replacing the involved suprema over infinite dimensional spaces by a supremum over a finite dimensional (product) test space $Y^\delta \subset Y$.
Under an inf-sup condition on the pair $(X^\delta,Y^\delta)$, the resulting MINRES approximation  $u^\delta \in X^\delta$ was shown to be quasi-best.
The evaluation of the so-discretized dual norms is implemented by introducing the Riesz lift of the residual, viewed as an element of ${Y^\delta}'$, as an extra variable $\lambda^\delta$, which results in a saddle point system for $(\lambda^\delta,u^\delta) \in Y^\delta \times X^\delta$.

In the case that one or more components of $Y$ are fractional Sobolev norms, a remaining issue is the evaluation of the scalar product between functions from $Y^\delta$.
Without compromizing quasi-optimality, it can be solved by replacing this scalar product  on $Y^\delta$ by an equivalent scalar product defined in terms of the inverse of an optimal preconditioner of linear complexity for the corresponding stiffness matrix. This construction permits then to eliminate the extra variable from the system, after which a symmetric positive definite system on $X^\delta$ remains, with a system matrix that can be applied in linear complexity.
Optimal preconditioners of linear complexity for fractional Sobolev spaces are available, e.g. BPX or multigrid for positive orders, and \cite{75.258,249.985} for negative orders.

Finally, by applying an optimal preconditioner of linear complexity on the trial space $X^\delta$, the symmetric positive definite system can be solved iteratively within a tolerance of the order of the discretization error in linear complexity.

\subsection{Current paper} 
A disadvantage of the method from \cite{204.19} is that its implementation is rather demanding, in particular because of the incorporation of the preconditioner(s) for fractional Sobolev norms on the boundary. In the current paper, we introduce an alternative approach, which can be expected to be computationally more costly, but that can be very easily implemented with a finite element package like NGSolve \cite{247.065}.
%Moreover, it yields approximations that are not only quasi-best, but actually are very close to the best approximation from the trial space. \rs{Ja?}
%\rs{Het werkt ook zeer goed in een adaptive setting. Ja?}

As in our approach from \cite{204.19}, given a trial space $X^\delta \subset X$, we select a test space $Y^\delta \subset Y$ such that $(X^\delta,Y^\delta)$ is (uniformly) inf-sup stable; replace the norm on $Y'$ for the residual by the discretized norm on ${Y^\delta}'$; and introduce its Riesz lift in $Y^\delta$ as an extra variable resulting in a saddle-point problem.

We proceed differently to deal with the problem that also the scalar product on $Y$ might not be evaluable. Recalling that $G\colon X \rightarrow Y'$, and thus $G'\colon Y \rightarrow X'$ is boundedly invertible, we equip $Y$ with the equivalent norm $\|G'\cdot\|_{X'}$, known as the optimal test norm. The resulting dual norm on $Y'$ then equals $\|G^{-1}\cdot\|_X$, so that the (unfeasible) exact residual minimization in this norm would yield the best approximation from $X^\delta$ w.r.t.~$\|\cdot\|_X$. Since also the new scalar product $\langle G'\cdot,G'\cdot \rangle_{X'}$ on $Y$ cannot be evaluated, for some finite dimensional $\hat{X}^\delta \subset X$
we replace it by a discretized version $\langle G'\cdot,G'\cdot \rangle_{{\hat{X}^\delta}'}$.  Assuming that also $(Y^\delta,\hat{X}^\delta)$ satisfies a (uniform) inf-sup condition, the resulting MINRES approximation is quasi-best. 
To make this approximation computable, we introduce the Riesz lift $\theta^\delta \in \hat{X}^\delta$ of $G' \lambda^\delta$, viewed as an element of ${\hat{X}^\delta}'$, as a third variable, and so saddle the system once more.

The resulting $3 \times 3$ block system for the triple $(\theta^\delta,\lambda^\delta,u^\delta) \in \hat{X}^\delta \times Y^\delta \times X^\delta$ has one block that corresponds to the stiffness matrix of $\hat{X}^\delta$ w.r.t.~the``easy'' scalar product $\langle\cdot,\cdot\rangle_X$, whereas all four other non-zero blocks correspond to system matrices of the bilinear form $(G\cdot)(\cdot)$ w.r.t.~$X^\delta \times Y^\delta$ or $\hat{X}^\delta \times Y^\delta$, or transposes of those blocks.
So the ``difficult'' scalar product $\langle\cdot,\cdot\rangle_Y$ vanished completely from the system, and the implementation is simple.

For suitable $Y^\delta$ and $\hat{X}^\delta$, we show that $\|\theta^\delta\|_X$ is an efficient, and asymptotically reliable a posteriori estimator for $\|u-u^\delta\|_X$.
We will use local $X$-norms of $\theta^\delta$ to drive an adaptive finite element method.
%Recalling that for $Y^\delta=Y$ and $\hat{X}^\delta=X$, $u^\delta$ is the best approximation from $X^\delta$ and $\theta^\delta=u-u^\delta$, it not surprising that in applications $u^\delta$ is very close to the best approximation from $X^\delta$, and $\theta^\delta$ is a very accurate approximation to the error $u-u^\delta$, so that the error in the `boosted approximation' $u^\delta+\theta^\delta$ is much smaller than that in $u^\delta$. \rs{JA?}

\subsection{A nonintrusive approach}
Although is not subject of this paper, for completeness and comparison, in our abstract framework we recall an alternative classical approach to deal with inhomogeneous essential boundary data (e.g. \cite{75.08}).

Recall the setting of $T \colon X \rightarrow Y_2'$ being bounded and surjective, $F\colon X_0:=\ker T \rightarrow Y_1'$ being boundedly invertible, and 
$G:=(F,T)\colon X \rightarrow Y':=Y_1' \times Y_2'$ being boundedly invertible.
We will use that $T$ has a bounded right-inverse $E\colon Y_2' \rightarrow X$ (e.g. $E:= g \mapsto G^{-1}(0,g)$).

Given $(f,g) \in Y_1' \times Y_2'$, consider the problem $Gu=(f,g)$. 
Let $X^\delta$ be a finite dimensional subspace of $X$, and $X_0^\delta:=X^\delta \cap X_0$.
Suppose that one has a method available that for $g=0$ (i.e.~`homogeneous essential boundary data') produces a $u^\delta \in X^\delta$ that is quasi-best, i.e., for some constant $C>0$, $\|u-u^\delta\|_X \leq C \inf_{w^\delta \in X^\delta_0} \|u-w^\delta\|_X$. 
Using this method, one can approximate the solution $u$ for general $g$ as follows:
First, construct $g^\delta \in \ran T|_{X^\delta}$, i.e., $g^\delta = T v^\delta$ for some $v^\delta \in X^\delta$, that approximates $g$; and second, approximate with aforementioned method the solution $z \in X_0$ of $Gz=(f-F v^\delta,0)$ by $z^\delta \in X_0^\delta$. Then
\be \label{q1}
\begin{split}
\|u-(z^\delta&+v^\delta)\|_X \leq \|u-(z+v^\delta)\|_X+\|z-z^\delta\|_X\\
&\leq \|G^{-1}\|_{\cL(Y',X)}\|g-g^\delta\|_{Y_2'}+C\inf_{\{w^\delta\in X^\delta\colon Tw^\delta=g^\delta\}} \|z+v^\delta-w^\delta\|_X\\
&\leq C\inf_{\{w^\delta\in X^\delta\colon Tw^\delta=g^\delta\}} \|u-w^\delta\|_X +(1+C)  \|G^{-1}\|_{\cL(Y',X)}\|g-g^\delta\|_{Y_2'}.
\end{split}
\ee

Furthermore, following \cite[\S3.3]{15.4}, let us consider the case that there exists a uniformly bounded projector $J^\delta\colon X \rightarrow X^\delta$ that preserves `homogeneous essential boundary conditions', i.e., $T J^\delta (\identity -E T)=0$.
Then
$$
T(J^\delta u-J^\delta E(g-g^\delta))=T(J^\delta (u-ET u)+J^\delta ET v^\delta)=T v^\delta=g^\delta,
$$
and so
\be \label{q2}
\begin{split}
\inf_{\{w^\delta\in X^\delta\colon Tw^\delta=g^\delta\}} & \|u-w^\delta\|_X
\leq \|u-J^\delta u\|_X + \|J^\delta E(g-g^\delta)\|_X\\
&\leq \|J^\delta\|_{\cL(X,X)} \big(\inf_{w^\delta\in X^\delta} \|u-w^\delta\|_X+\|E\|_{\cL(Y_2',X)}\|g-g^\delta\|_{Y_2'}\big).
\end{split}
\ee
Finally, if $g^\delta:=P^\delta g$ for some uniformly bounded projector $P^\delta$, then
\be \label{q3}
\begin{split}
\|g-g^\delta\|_{Y_2'} &\leq \|P^\delta\|_{\cL(Y_2',Y_2')} \inf_{w^\delta\in X^\delta}\|Tu-Tw^\delta\|_{Y_2'} \\
& \leq \|P^\delta\|_{\cL(Y_2',Y_2')}\|T\|_{\cL(X,Y_2')}  \inf_{w^\delta\in X^\delta} \|u-w^\delta\|_X,
\end{split}
\ee
and by combining \eqref{q1},   \eqref{q2}, and  \eqref{q3}, we conclude that $z^\delta+v^\delta$ is a quasi-best approximation from $X^\delta$ to $u$.

An a posteriori estimator of the error in $u-(z^\delta+v^\delta)$ must control the error in both $z-z^\delta$ in $X$ and in $g-g^\delta$ in $Y_2'$.
Reliable error estimators for the latter term have been developed in \cite{168.861} (for Dirichlet data) and \cite{35.8553} (for Neumann data), but require additional smoothness of $g$ beyond being in $Y_2'$ (Dirichlet and Neumann data are required to be in $H^1$- or $L_2$-spaces on the boundary).

\subsection{Outline}
In Section~\ref{sec:2} we recall the principle of a MINRES discretization, and recall approaches to deal with the situation when
 the residual is measured in a dual norm $\|\cdot\|_{Y'}$, and when possibly also the norm on $Y$ cannot be evaluated.

In Section~\ref{sec:3}, we present an alternative solution for the second problem by equipping $Y$ with the optimal test norm. In addition, we discuss a posteriori error estimation.
% and present an improved `boosted' approximation. \rs{Ja?}

The method from Section~\ref{sec:3} requires two uniform inf-sup conditions to be valid. In Section~\ref{sec:4} we discuss them for four different well-posed variational formulations of Poisson's problem with (generally inhomogeneous) Dirichlet and/or Neumann boundary conditions.

Numerical results are presented in Section~\ref{sec:5}.

\section{Least squares or minimal residual discretizations} \label{sec:2}
\subsection{Well-posed operator equation} 
For some Hilbert spaces $X$ and $Y$, for convenience over $\R$, an operator $G \in \Lis(X,Y')$, and an $f \in Y'$, we consider the equation
\be \label{system}
G u = f.
\ee
With the notation $G \in \Lis(X,Y')$, we mean that $G$ is a boundedly invertible linear operator $X \rightarrow Y'$, i.e., $G \in \cL(X,Y')$ and $G^{-1} \in \cL(Y',X)$.
\new{In the following, $u$ will always denote the solution of \eqref{system}.}

For any closed, in applications finite dimensional subspace $\{0\} \subsetneq X^\delta \subsetneq X$
from a family $\{X^\delta\}_\delta$ of such subspaces\footnote[4]{We impose the harmless conditions $X^\delta \neq \{0\}$, $X^\delta \neq X$ because at several occasions we will use that in a Hilbert space the norm of a projector $P \not\in\{ \identity,0\}$ is equal to the norm of $\identity-P$ (\cite{169.5,315.7}).}, consider the minimal residual or least squares approximation
\be \label{eq:exact}
u^\delta:=\argmin_{w \in X^\delta} \tfrac12 \|G w -f\|^2_{Y'}.
\ee

\subsection{Discretizing \new{the norm on $Y'$}, and saddle-point formulation} \label{s1}
Unless $Y$ is such that the Riesz map $R_Y\colon Y' \rightarrow Y$ can be efficiently evaluated, i.e., $Y$ is a (Cartesian product of) $L_2$-space(s), 
the minimizer $u^\delta$ from \eqref{eq:exact} is not computable.

Therefore, for a closed, in applications finite dimensional subspace $\{0\} \subsetneq Y^\delta=Y^\delta(X^\delta) \subsetneq Y$ with
\be \label{gammadelta}
\gamma^\delta := \inf_{0 \neq w \in X^\delta}\frac{\sup_{0 \neq v \in Y^\delta} \frac{|(G w)(v)|}{\|v\|_{Y}}}{\|G w\|_{Y'}}>0,
\ee
we \emph{replace} above $u^\delta$ by
\be \label{eq:minresprac2}
u^\delta:=\argmin_{w \in X^\delta} \tfrac12 \sup_{0 \neq v \in Y^\delta} \frac{|(G w-f)(v)|^2}{\|v\|_{Y}^2}.
\ee
Assuming $\inf_\delta \gamma^\delta >0$,  the following theorem shows that $u^\delta$ is a \emph{quasi-best} approximation to $u$ from $X^\delta$. 
\begin{theorem}[{\cite[Thm.~3.3]{204.19}}]  \label{thm:quasi-opt} Setting $\nrm \cdot\nrm_X:=\|G \cdot\|_{Y'}$ on $X$, for  $u^\delta$ from \eqref{eq:minresprac2} it holds that
$$
 \inf_{u \in X \setminus X^\delta} \frac{\inf_{w \in X^\delta}\nrm u-w\nrm_X}{\nrm u-u^\delta\nrm_X} \,\,=\,\, \gamma^\delta.
$$
\end{theorem}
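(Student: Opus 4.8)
The statement is an identity, so I would prove two inequalities. The key observation is that on $X^\delta$ the minimization \eqref{eq:minresprac2} is, after introducing the discretized norm $\|z\|_{{Y^\delta}'} := \sup_{0\neq v\in Y^\delta}\frac{|z(v)|}{\|v\|_Y}$ on $Y'$ (more precisely on the quotient, or just as a seminorm on $Y'$), literally a least-squares problem with respect to the inner product induced by $\|\cdot\|_{{Y^\delta}'}$ pulled back through $G$. Concretely, let $Q^\delta\colon Y'\to Y'$ be the $\|\cdot\|_{Y'}$-orthogonal projector onto $R_Y^{-1}(Y^\delta)$ (the Riesz representatives of functionals that "live on $Y^\delta$"); then $\|z\|_{{Y^\delta}'} = \|Q^\delta z\|_{Y'}$ for all $z\in Y'$. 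Hence \eqref{eq:minresprac2} reads $u^\delta = \argmin_{w\in X^\delta}\|Q^\delta(Gw-f)\|_{Y'} = \argmin_{w\in X^\delta}\|Q^\delta G(w-u)\|_{Y'}$, using $Gu=f$.

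**Upper bound $\nrm u-u^\delta\nrm_X \le \gamma^{\delta,-1}\inf_{w\in X^\delta}\nrm u-w\nrm_X$.** First, the inf-sup condition \eqref{gammadelta} says exactly that $\|Q^\delta Gw\|_{Y'}\ge \gamma^\delta\|Gw\|_{Y'} = \gamma^\delta\nrm w\nrm_X$ for all $w\in X^\delta$, while trivially $\|Q^\delta z\|_{Y'}\le\|z\|_{Y'}$. Now for any $w\in X^\delta$, since $u^\delta-w\in X^\delta$ and $u^\delta$ minimizes $\|Q^\delta G(\cdot-u)\|_{Y'}$ over $X^\delta$,
\[
\gamma^\delta\nrm u^\delta-w\nrm_X \le \|Q^\delta G(u^\delta-w)\|_{Y'} \le \|Q^\delta G(u^\delta-u)\|_{Y'} + \|Q^\delta G(u-w)\|_{Y'} \le \|Q^\delta G(w-u)\|_{Y'} + \|Q^\delta G(u-w)\|_{Y'} \le 2\|G(u-w)\|_{Y'},
\]
wait — this only gives a factor $2/\gamma^\delta$, not $1/\gamma^\delta$. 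To get the sharp constant one must use the least-squares (Pythagorean) structure, not the triangle inequality: $Q^\delta Gu^\delta$ is the $\|\cdot\|_{Y'}$-orthogonal projection of $Q^\delta f = Q^\delta Gu$ onto the subspace $Q^\delta G X^\delta$, so for any $w\in X^\delta$, $\langle Q^\delta G(u^\delta-u), Q^\delta G(u^\delta-w)\rangle_{Y'} = 0$, whence $\|Q^\delta G(u-w)\|_{Y'}^2 = \|Q^\delta G(u-u^\delta)\|_{Y'}^2 + \|Q^\delta G(u^\delta-w)\|_{Y'}^2 \ge \|Q^\delta G(u^\delta-w)\|_{Y'}^2 \ge (\gamma^\delta)^2\nrm u^\delta-w\nrm_X^2$, and also $\ge \|Q^\delta G(u-u^\delta)\|_{Y'}^2$. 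Then $\nrm u-u^\delta\nrm_X \le \nrm u-w\nrm_X + \nrm u^\delta-w\nrm_X \le \nrm u-w\nrm_X + \tfrac1{\gamma^\delta}\|Q^\delta G(u-w)\|_{Y'} \le (1+\tfrac1{\gamma^\delta})\nrm u-w\nrm_X$. Hmm, that gives $1+1/\gamma^\delta$, still not $1/\gamma^\delta$. Let me reconsider — I think the right estimate avoids the triangle inequality on $\nrm\cdot\nrm_X$ entirely: decompose $u-u^\delta = (u - P u) + (Pu - u^\delta)$ where... actually the cleanest route: let $w^\* := \argmin_{w\in X^\delta}\nrm u-w\nrm_X$, i.e. $G w^\*$ is the $\|\cdot\|_{Y'}$-projection of $Gu=f$ onto $GX^\delta$. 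I'd show $\nrm u - u^\delta\nrm_X^2 = \nrm u - w^\*\nrm_X^2 + \nrm w^\* - u^\delta\nrm_X^2$ (Pythagoras w.r.t. the *full* norm, using that $Gu-Gw^\*\perp_{Y'} GX^\delta$ and $w^\*-u^\delta\in X^\delta$), and then bound $\nrm w^\*-u^\delta\nrm_X$ by exploiting that $u^\delta$ is optimal for the $Q^\delta$-problem. So I realize the factor-chasing here IS the technical heart, and the identity (equality, with $\sup$ over $u$) means I must also exhibit a near-extremal $u$.

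**Lower bound (sharpness).** For the reverse inequality $\sup_{u\notin X^\delta}\frac{\inf_w\nrm u-w\nrm_X}{\nrm u-u^\delta\nrm_X}\ge\gamma^\delta$, I would construct, for any $\epsilon>0$, a datum $f$ (equivalently a solution $u$) for which the ratio is at least $\gamma^\delta-\epsilon$: take $w_0\in X^\delta$ nearly realizing the infimum in \eqref{gammadelta}, so $\|Q^\delta Gw_0\|_{Y'}\approx\gamma^\delta\|Gw_0\|_{Y'}$, and choose $u$ so that $Gu = Gw_0 + (\text{component orthogonal to }GX^\delta)$ arranged so that $u^\delta = 0$ (or so that $w^\*=w_0$ while $u^\delta$ is forced small); then the best approximation keeps $\|Gw_0\|_{Y'}$ while the computed $u^\delta$ only "sees" the $Q^\delta$-norm and discards most of $w_0$. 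The main obstacle, and where I'd spend the most care, is precisely getting the constant to be $\gamma^\delta$ and not $1/\gamma^\delta$ or $1+1/\gamma^\delta$: that forces the argument to be done in terms of the $\|\cdot\|_{Y'}$-geometry of $GX^\delta$ and its image under $Q^\delta$ (an operator which is *not* an orthogonal projection on $GX^\delta$ in the relevant inner product), most naturally via a singular-value / principal-angle description of $Q^\delta|_{GX^\delta}$, whose smallest singular value is exactly $\gamma^\delta$. Since this is the exact statement of \cite[Thm.~3.3]{204.19}, I would ultimately reduce to that computation: identify $\nrm u-u^\delta\nrm_X$ as a quantity governed by $\|Q^\delta G(u-u^\delta)\|_{Y'}$ plus an orthogonal remainder, and read off that the worst-case ratio equals the smallest singular value $\gamma^\delta$ of $Q^\delta$ restricted to $GX^\delta$.
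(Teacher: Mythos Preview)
The paper does not prove this theorem; it is quoted from \cite[Thm.~3.3]{204.19}. However, the paper's footnote stating that ``in a Hilbert space the norm of a projector $P \not\in\{\identity,0\}$ is equal to the norm of $\identity-P$'' (citing \cite{169.5,315.7}) is a deliberate signpost to the intended argument, and this is precisely the ingredient your proposal is missing.

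Your setup is correct: with $Q^\delta$ the $\|\cdot\|_{Y'}$-orthogonal projector onto $R_Y^{-1}Y^\delta$, the map $P^\delta\colon u\mapsto u^\delta$ is the oblique projector onto $X^\delta$ characterised (via $G$) as the projector onto $V:=GX^\delta$ along $(R_Y^{-1}Y^\delta)^\perp$. You also correctly observe that the inf-sup constant $\gamma^\delta$ is the smallest singular value of $Q^\delta|_V$, from which $\|P^\delta\|=1/\gamma^\delta$ follows by a standard computation. What you then need is
\[
\nrm u-u^\delta\nrm_X=\nrm(\identity-P^\delta)(u-w)\nrm_X\le\|\identity-P^\delta\|\inf_{w\in X^\delta}\nrm u-w\nrm_X,
\]
together with $\|\identity-P^\delta\|=\|P^\delta\|=1/\gamma^\delta$. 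The first equality uses only that $P^\delta$ is a projector onto $X^\delta$; the second is Kato's identity. This immediately gives the sharp bound $1/\gamma^\delta$, and equality is attained because the operator norm $\|\identity-P^\delta\|$ is attained (or approximated) at some $u-w$, which furnishes the extremal $u$.

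Your repeated attempts via the triangle inequality (yielding $2/\gamma^\delta$, then $1+1/\gamma^\delta$) cannot succeed: any argument that splits $u-u^\delta$ additively and bounds the pieces separately will lose the sharp constant. Your final appeal to a ``singular-value / principal-angle description'' is pointing in the right direction---indeed the Kato identity can be derived from the symmetry of principal angles between $V$ and $R_Y^{-1}Y^\delta$---but as written it is an assertion rather than an argument, and you never actually connect the smallest singular value of $Q^\delta|_V$ to $\|\identity-P^\delta\|$. That connection \emph{is} the content of the theorem.
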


The solution $u \in X$ of $G u = f$ is equal to $\argmin_{w \in X} \tfrac12 \|G w -f\|^2_{Y'}$, and so it solves the corresponding
 Euler-Lagrange equations
$$
\langle f-G u, G \undertilde{u}\rangle_{Y'}=0 \quad (\undertilde{u} \in X).
$$
Introducing $\lambda:=R_Y(f-Gu)$, the pair $(\lambda,u) \in Y \times X$ is the unique solution of
$$
 \langle \lambda, \undertilde{\lambda}\rangle_Y + (G u)(\undertilde{\lambda})  + (G \undertilde{u})(\lambda) = f(\undertilde{\lambda}) \quad ((\undertilde{\lambda},\undertilde{u}) \in Y \times X).
 $$
 
 Completely analogously, the minimal residual approximation $u^\delta$ from \eqref{eq:minresprac2} can be computed as the second component of the pair $(\lambda^\delta,u^\delta) \in Y^\delta \times X^\delta$ that solves
 \be \label{eq:minresprac3}
 \langle \lambda^\delta, \undertilde{\lambda}^\delta\rangle_Y + (G u^\delta)(\undertilde{\lambda}^\delta)  + (G \undertilde{u}^\delta)(\lambda^\delta) = f(\undertilde{\lambda}^\delta) \quad ((\undertilde{\lambda}^\delta,\undertilde{u}^\delta) \in Y^\delta \times X^\delta).
 \ee

\subsection{Changing the norm on $Y^\delta$} \label{s2} In several applications, not only $\|\cdot\|_{Y'}$ but also $\langle \cdot,\cdot\rangle_Y$ cannot be efficiently evaluated.
This occurs for example when $Y$ is a Cartesian product of spaces with one or more of them being fractional Sobolev spaces.
Therefore, let $\langle \cdot,\cdot\rangle_{Y^\delta}$ be a (efficiently evaluable) scalar product on $Y^\delta$, whose corresponding norm $\|\cdot\|_{Y^\delta}$ satisfies, for some $0<m^\delta\leq M^\delta<\infty$,
\be \label{normequiv}
m^\delta \|\cdot\|_{Y^\delta}^2 \leq \|\cdot\|_Y^2 \leq M^\delta\|\cdot\|_{Y^\delta}^2  \quad \text{on } Y^\delta.
\ee
Now we \emph{replace} $u^\delta$ from \eqref{eq:minresprac2} by
$$
u^\delta:=\argmin_{w \in X^\delta} \tfrac12 \sup_{0 \neq v \in Y^\delta} \frac{|(G w-f)(v)|^2}{\|v\|_{Y^\delta}^2},
$$
being the second component of the pair $(\lambda^\delta,u^\delta) \in Y^\delta \times X^\delta$ that solves
\be \label{eq:minresprac4}
 \langle \lambda^\delta, \undertilde{\lambda}^\delta\rangle_{Y^\delta} + (G u^\delta)(\undertilde{\lambda}^\delta)  + (G \undertilde{u}^\delta)(\lambda^\delta) = f(\undertilde{\lambda}^\delta) \quad ((\undertilde{\lambda}^\delta,\undertilde{u}^\delta) \in Y^\delta \times X^\delta).
 \ee

Assuming $\inf_\delta \gamma^\delta >0$, and $\sup_\delta M^\delta/m^\delta <\infty$, the next result shows that this $u^\delta$ is a \emph{quasi-best} approximation to $u$ from $X^\delta$. 

\begin{proposition}[{\cite[Thm.~3.6]{204.19}}] \label{prop1} For the solution $(\lambda^\delta,u^\delta) \in Y^\delta \times X^\delta$ of \eqref{eq:minresprac4}, it holds that
$$
\nrm u-u^\delta\nrm_X \leq \frac{\sqrt{M^\delta}}{\gamma^\delta \sqrt{m^\delta}} \inf_{w \in X^\delta}\nrm u-w\nrm_X.
$$
\end{proposition}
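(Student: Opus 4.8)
The plan is to reduce Proposition~\ref{prop1} to Theorem~\ref{thm:quasi-opt} by viewing the modified problem \eqref{eq:minresprac4} as an instance of the earlier setup with the norm on $Y$ replaced by an \emph{equivalent} one. Concretely, I would introduce on $Y$ a new norm $|\cdot|_Y$ that coincides with $\|\cdot\|_{Y^\delta}$ on the subspace $Y^\delta$ (for instance, extend $\langle\cdot,\cdot\rangle_{Y^\delta}$ to an inner product on all of $Y$, or simply note that for the argument only the restriction to $Y^\delta$ matters since both the minimization \eqref{eq:minresprac4} and the inf-sup quotient only see $Y^\delta$). By \eqref{normequiv}, $|\cdot|_Y$ and $\|\cdot\|_Y$ are equivalent on $Y^\delta$ with constants $\sqrt{m^\delta}$ and $\sqrt{M^\delta}$, and consequently the dual norms $|\cdot|_{Y'}$ and $\|\cdot\|_{Y'}$ are equivalent with the reciprocal constants.

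The key steps, in order: (1) observe that \eqref{eq:minresprac4} is exactly the saddle-point system \eqref{eq:minresprac3} with $\langle\cdot,\cdot\rangle_Y$ replaced by $\langle\cdot,\cdot\rangle_{Y^\delta}$, hence $u^\delta$ from \eqref{eq:minresprac4} is the minimal residual approximation \eqref{eq:minresprac2} measured in $\sup_{0\neq v\in Y^\delta}|(G\cdot-f)(v)|/\|v\|_{Y^\delta}$; (2) apply Theorem~\ref{thm:quasi-opt} with the discretized norm built from $\|\cdot\|_{Y^\delta}$ in place of $\|\cdot\|_Y$, obtaining quasi-optimality with respect to the modified energy norm $\nrm\cdot\nrm_{X,\delta}:=\|G\cdot\|_{(Y^\delta,\|\cdot\|_{Y^\delta})^{*}\text{-compatible}}$ — more precisely, with respect to $\|G\cdot\|_{Y'}$ up to the norm-equivalence factors; (3) convert back: for any $w\in X$,
$$
\sqrt{m^\delta}\,\|Gw\|_{Y'}\ \le\ \sup_{0\neq v\in Y^\delta}\frac{|(Gw)(v)|}{\|v\|_{Y^\delta}}\cdot(\text{const})\ \le\ \sqrt{M^\delta}\,\|Gw\|_{Y'}
$$
is not quite what is needed; instead I would track the inf-sup constant $\gamma^\delta$ together with \eqref{normequiv} directly through the quasi-optimality estimate, so that the constant $1/\gamma^\delta$ from Theorem~\ref{thm:quasi-opt} picks up exactly one factor $\sqrt{M^\delta}/\sqrt{m^\delta}$ from comparing the two inf-sup quotients. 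Chaining these bounds yields $\nrm u-u^\delta\nrm_X\le \frac{\sqrt{M^\delta}}{\gamma^\delta\sqrt{m^\delta}}\inf_{w\in X^\delta}\nrm u-w\nrm_X$.

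The main obstacle is bookkeeping of which norm lives where: the statement is phrased entirely in terms of the fixed energy norm $\nrm\cdot\nrm_X=\|G\cdot\|_{Y'}$, whereas the algorithm \eqref{eq:minresprac4} naturally produces a best approximation in the \emph{perturbed} norm $\|G\cdot\|$ induced by $\|\cdot\|_{Y^\delta}$. The delicate point is to show that passing from the perturbed optimality back to the unperturbed norm costs only the single factor $\sqrt{M^\delta/m^\delta}$ (not its square), which requires applying \eqref{normequiv} asymmetrically — once in each direction — at the two places where the residual is compared to an infimum. Everything else is a direct invocation of Theorem~\ref{thm:quasi-opt} (or equivalently a Céa-type argument for the saddle-point problem) and is routine. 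One should also record that the hypothesis $\gamma^\delta>0$ from \eqref{gammadelta}, stated for the original norm $\|\cdot\|_Y$, transfers to the $\|\cdot\|_{Y^\delta}$-setting with a constant deteriorated by at most $\sqrt{m^\delta/M^\delta}$, which is harmless under $\sup_\delta M^\delta/m^\delta<\infty$.
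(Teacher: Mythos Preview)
The paper does not prove this result; it is quoted from \cite[Thm.~3.6]{204.19}. Your plan is correct in substance, and you have isolated the essential point: the norm equivalence \eqref{normequiv} must enter asymmetrically---once in each direction---so that the quasi-optimality constant $1/\gamma^\delta$ acquires exactly one factor $\sqrt{M^\delta/m^\delta}$ rather than its square.

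One refinement is worth recording. Your opening device of extending $\langle\cdot,\cdot\rangle_{Y^\delta}$ to an inner product on all of $Y$ and then invoking Theorem~\ref{thm:quasi-opt} as a black box will \emph{not} deliver the stated constant: the energy norm $\nrm\cdot\nrm_X=\|G\cdot\|_{Y'}$ involves a supremum over all of $Y$, so converting the theorem's conclusion back to $\nrm\cdot\nrm_X$ costs the equivalence ratio of the extension on all of $Y$, and one then also pays for relating $\gamma^{\delta}$ to the modified inf-sup constant---the net effect is a factor $M^\delta/m^\delta$, not $\sqrt{M^\delta/m^\delta}$. Your parenthetical that ``only the restriction to $Y^\delta$ matters'' is therefore not right for $\nrm\cdot\nrm_X$. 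The route that actually yields the sharp constant is the one you list as the alternative (``track $\gamma^\delta$ directly through the estimate'', equivalently the C\'ea-type argument), and is presumably what is done in \cite{204.19}: with $[z]:=\sup_{0\ne v\in Y^\delta}|(Gz)(v)|/\|v\|_{Y^\delta}$, the map $Q^\delta\colon u\mapsto u^\delta$ is the projector onto $X^\delta$ orthogonal with respect to the associated semi-inner-product; since $\nrm\cdot\nrm_X$ is a Hilbert norm and $Q^\delta\notin\{0,\identity\}$, Kato's identity gives $\|\identity-Q^\delta\|_{\cL(X,\nrm\cdot\nrm_X)}=\|Q^\delta\|$, and for any $z\in X$,
\[
\gamma^\delta\,\nrm Q^\delta z\nrm_X\ \le\ \sup_{0\ne v\in Y^\delta}\frac{|(GQ^\delta z)(v)|}{\|v\|_Y}\ \le\ \frac{[Q^\delta z]}{\sqrt{m^\delta}}\ \le\ \frac{[z]}{\sqrt{m^\delta}}\ \le\ \frac{\sqrt{M^\delta}}{\sqrt{m^\delta}}\,\nrm z\nrm_X,
\]
using \eqref{gammadelta} for $Q^\delta z\in X^\delta$, \eqref{normequiv} at the second and fourth steps, and $[Q^\delta z]\le[z]$ from orthogonality. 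This is precisely your asymmetric chain, made rigorous without any extension of the inner product.
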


Concerning the selection of $\|\cdot\|_{Y^\delta}$, let $K^\delta={K^\delta}' \in \Lis({Y^\delta}',Y^\delta)$ be an operator whose application can be computed efficiently.
Such an operator is called a \emph{preconditioner} for $A^\delta \in \Lis(Y^\delta,{Y^\delta}')$ defined by $(A^\delta v)(\undertilde{v}):=\langle v,\undertilde{v}\rangle_Y$.
Setting
$$
\langle v,\undertilde{v}\rangle_{Y^\delta}:=((K^\delta)^{-1} v)(\undertilde{v}) \quad (v,\undertilde{v} \in Y^\delta),
$$
the corresponding norm $\|\cdot\|_{Y^\delta}$ satisfies \eqref{normequiv} with $m^\delta=\lambda_{\min}(K^\delta A^\delta)$ and $M^\delta=\lambda_{\max}(K^\delta A^\delta)$.
By substituting above choice for $\langle \cdot,\cdot\rangle_{Y^\delta}$ in \eqref{eq:minresprac4}, and by subsequently eliminating $\lambda^\delta$ from the saddle-point system, one infers that $u^\delta \in X^\delta$ can be computed as the solution of the \emph{symmetric positive definite} system
$$
(G \tilde{u}^\delta)(K^\delta(G u^\delta-f))=0 \quad (\tilde{u}^\delta \in X^\delta).
$$

\section{Equipping $Y$ with the optimal test norm}  \label{sec:3}
 \subsection{Optimal test norm} \label{s3} 
 In applications the solution proposed in Sect.~\ref{s2} to circumvent the evaluation \new{of} a `difficult' scalar product $\langle\cdot,\cdot\rangle_Y$ by means of the introduction 
 of a preconditioner may require quite some efforts concerning coding. This holds true when $Y$ is a Cartesian product of spaces with one or more of them being fractional Sobolev spaces on a manifold.
 In the current section we give an alternative for this approach by replacing the canonical norm on $Y$ by the so-called optimal test norm.
 
As a consequence of $G \in \Lis(X,Y')$, it holds that $G' \in \Lis(Y,X')$.
 From here on we \emph{replace} the canonical norm on $Y$ by the equivalent norm
 $$
 \|G'\cdot\|_{X'},
 $$
 and correspondingly, equip $Y'$ with the resulting dual norm
 $$\sup_{0 \neq v \in Y}\frac{|\cdot(v)|}{\|G'v\|_{X'}}=\sup_{0 \neq g \in X'}\frac{|g(G^{-1}\cdot)|}{\|g\|_{X'}}=\|G^{-1}\cdot\|_X.
 $$
 We conclude that w.r.t.~these new norms on $Y$ and $Y'$, $G':Y \rightarrow X'$ and $G:X \rightarrow Y'$ are \emph{isometries}. For this reason $\|G'\cdot\|_{X'}$ is known as the \emph{optimal test norm} on $Y$ (\cite{19.83,320.7, 58.3,35.8565}).
 
 \subsection{Discretizing the norm on $Y'$, and saddle-point formulation}
 Also with the new norm on $Y'$, the minimizer in \eqref{eq:exact} cannot be computed.
 Following Sect.~\ref{s1}, writing this norm in dual form $\sup_{0 \neq v \in Y}\frac{|\cdot(v)|}{\|G'v\|_{X'}}$, we discretize it 
 by supremizing over $Y^\delta\setminus\{0\}$, and rewrite the resulting least-squares problem in saddle-point form.
 
 So we consider
$$
u^\delta:=\argmin_{w \in X^\delta} \tfrac12 \sup_{0 \neq v \in Y^\delta} \frac{|(G w-f)(v)|^2}{\|G' v\|_{X'}^2},
$$
being the second component of $(\lambda^\delta,u^\delta) \in Y^\delta \times X^\delta$ that solves
\be \label{eq:minresprac5}
\langle G'\lambda^\delta, G'\undertilde{\lambda}^\delta\rangle_{X'} + (G u^\delta)(\undertilde{\lambda}^\delta)  + (G \undertilde{u}^\delta)(\lambda^\delta) = f(\undertilde{\lambda}^\delta) \quad ((\undertilde{\lambda}^\delta,\undertilde{u}^\delta) \in Y^\delta \times X^\delta).
\ee
Theorem~\ref{thm:quasi-opt} applies, where now $\nrm\cdot\nrm_X$ reads as the canonical norm $\|\cdot\|_X$, i.e.,
$$
\inf_{u \in X \setminus X^\delta} \frac{\inf_{w \in X^\delta}\| u-w\|_X}{\| u-u^\delta\|_X} \,\,=\,\, \gamma^\delta,
$$
with the definition of $\gamma^\delta$ reading as
 \be \label{gammadeltanew}
\gamma^\delta := \inf_{0 \neq w \in X^\delta}\frac{\sup_{0 \neq y \in Y^\delta} \frac{|(G w)(y)|}{\|G'y\|_{X'}}}{\|w\|_{X}}>0.
\ee

 \subsection{Discretizing \new{the norm on $X'$}, and saddling the system once more}
Unless $X$ is such that the Riesz map $R_X\colon X' \rightarrow X$ can be efficiently evaluated, i.e., $X$ is a (Cartesian product of) $L_2$-space(s),  as in Sect.~\ref{s2} we are  in the situation that the norm on $Y^\delta$, here $\|G' \cdot\|_{X'}$, cannot be evaluated, so that
\eqref{eq:minresprac5} does not correspond to an implementable method.
Similar to Sect.~\ref{s2}, we will therefore replace this norm by an equivalent one.

Let $\{0\} \subsetneq \hat{X}^\delta =\hat{X}^\delta(Y^\delta) \subsetneq X$ be a finite dimensional subspace for which
\be \label{mudelta}
\mu^\delta:=\inf_{0 \neq v \in Y^\delta}
\frac{\sup_{0 \neq w \in \hat{X}^\delta} \frac{|(G' v)(w)|}{\|w\|_{X}}}
{\|G' v\|_{X'}}>0.\,\,\,\footnotemark[2]
\ee%
\footnotetext[2]{Notice that \eqref{mudelta} is of the form \eqref{gammadelta} by reading $(G,X,Y,X^\delta,Y^\delta)$ as $(G',Y,X,Y^\delta,\hat{X}^\delta)$.}%
Setting on ${\hat{X}^\delta}' \supset X'$, $\|\cdot\|_{{\hat{X}^\delta}'}:=\sup_{0 \neq w \in \hat{X}^\delta} \frac{|\cdot(w)|}{\|w\|_{X}}$, and denoting with $\langle \cdot,\cdot\rangle_{{\hat{X}^\delta}'}$ the corresponding bilinear form, \new{we have 
\be \label{normequiv2}
\|G'\cdot\|^2_{{\hat{X}^\delta}'} \leq \|G'\cdot\|^2_{X'} \leq \frac{1}{(\mu^\delta)^2} \|G'\cdot\|^2_{{\hat{X}^\delta}'} \quad \text{on } Y^\delta,
\ee
being the counterpart of \eqref{normequiv}. As in Sect.~\ref{s2}, we now \emph{replace} \eqref{eq:minresprac5} by the problem of finding $(\lambda^\delta,u^\delta) \in Y^\delta \times X^\delta$ that solves
\be \label{eq:minresprac6}
\langle G'\lambda^\delta, G'\undertilde{\lambda}^\delta\rangle_{{\hat{X}^\delta}'} + (G u^\delta)(\undertilde{\lambda}^\delta)  + (G \undertilde{u}^\delta)(\lambda^\delta) = f(\undertilde{\lambda}^\delta) \quad ((\undertilde{\lambda}^\delta,\undertilde{u}^\delta) \in Y^\delta \times X^\delta).
\ee

Recalling that with our current norm $\|G'\cdot\|_{X'}$ on $Y$, the norm $\nrm\cdot\nrm_X$ reads as $\|\cdot\|_X$, an application of Proposition~\ref{prop1} shows the following.}

\begin{proposition} \label{prop2} With $\gamma^\delta$ and $\mu^\delta$ as defined in \eqref{gammadeltanew} and \eqref{mudelta}, for $u^\delta \in X^\delta$ as defined in \eqref{eq:minresprac6} it holds that
$$
\|u-u^\delta\|_X \leq \frac{1}{\gamma^\delta \mu^\delta} \inf_{w \in X^\delta} \|u-w\|_X.
$$
\end{proposition}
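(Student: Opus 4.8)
The plan is to obtain Proposition~\ref{prop2} as a direct instance of Proposition~\ref{prop1}, the only work being to read off the right norms and constants once the canonical norm on $Y$ has been replaced by the optimal test norm $\|G'\cdot\|_{X'}$. First I would record the effect of this replacement on the ingredients of Proposition~\ref{prop1}. Since $G\colon X\to Y'$ is an isometry for the optimal test norm, the norm $\nrm\cdot\nrm_X=\|G\cdot\|_{Y'}$ figuring in Proposition~\ref{prop1} equals $\|G^{-1}G\cdot\|_X=\|\cdot\|_X$, and the quantity $\gamma^\delta$ of \eqref{gammadelta}, with $\|\cdot\|_Y$ read as $\|G'\cdot\|_{X'}$ and $\|Gw\|_{Y'}$ read as $\|w\|_X$, becomes exactly the $\gamma^\delta$ of \eqref{gammadeltanew}, which is positive by assumption.

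Second, I would check that $\langle G'\cdot,G'\cdot\rangle_{{\hat{X}^\delta}'}$ is a genuine scalar product on $Y^\delta$, so that \eqref{eq:minresprac6} is literally an instance of \eqref{eq:minresprac4} and $\lambda^\delta$ is well defined. Indeed, $v\mapsto\|G'v\|_{{\hat{X}^\delta}'}$ is a seminorm on $Y$, and \eqref{mudelta} says it dominates $\mu^\delta\|G'\cdot\|_{X'}$ on $Y^\delta$; since $\mu^\delta>0$ and $G'$ is injective, $\|G'v\|_{{\hat{X}^\delta}'}=0$ with $v\in Y^\delta$ forces $v=0$. The same inequality, squared, is the right-hand estimate in \eqref{normequiv2}, while the left-hand one is immediate from $\hat{X}^\delta\subset X$. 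Hence the norm $\|G'\cdot\|_{{\hat{X}^\delta}'}$ on $Y^\delta$ plays the role of $\|\cdot\|_{Y^\delta}$ in \eqref{normequiv} (with $\|\cdot\|_Y$ the optimal test norm), and \eqref{normequiv2} identifies the equivalence constants as $m^\delta=1$ and $M^\delta=1/(\mu^\delta)^2$.

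Third, with these identifications Proposition~\ref{prop1} applies verbatim to the solution $(\lambda^\delta,u^\delta)$ of \eqref{eq:minresprac6} and yields
$$\|u-u^\delta\|_X=\nrm u-u^\delta\nrm_X\leq\frac{\sqrt{M^\delta}}{\gamma^\delta\sqrt{m^\delta}}\inf_{w\in X^\delta}\nrm u-w\nrm_X=\frac{1}{\gamma^\delta\mu^\delta}\inf_{w\in X^\delta}\|u-w\|_X,$$
which is the claimed bound.

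The step I expect to need the most care is the second one: confirming positive definiteness of the discretized bilinear form $\langle G'\cdot,G'\cdot\rangle_{{\hat{X}^\delta}'}$ on $Y^\delta$ and extracting the correct constants $m^\delta,M^\delta$ from \eqref{normequiv2}. Everything else is bookkeeping around the fact that the optimal test norm turns both $G$ and $G'$ into isometries; no estimate beyond \eqref{mudelta} is required.
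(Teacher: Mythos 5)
Your proposal is correct and follows exactly the route the paper intends: read \eqref{eq:minresprac6} as an instance of \eqref{eq:minresprac4} with the optimal test norm on $Y$, use \eqref{normequiv2} to identify $m^\delta=1$ and $M^\delta=1/(\mu^\delta)^2$, and invoke Proposition~\ref{prop1}. Your extra check that $\langle G'\cdot,G'\cdot\rangle_{{\hat{X}^\delta}'}$ is positive definite on $Y^\delta$ (via \eqref{mudelta} and injectivity of $G'$) is a detail the paper leaves implicit, but it is not a different argument.
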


To turn \eqref{eq:minresprac6} into an equivalent evaluable system, we introduce an additional variable.
With the Riesz map $R_{\hat{X}^\delta}\colon {\hat{X}^\delta}' \rightarrow \hat{X}^\delta$, let $\theta^\delta:=R_{\hat{X}^\delta} G' \lambda^\delta$, i.e.,
$$
(G' \lambda^\delta)(\undertilde{\theta}^\delta)=\langle \theta^\delta,\undertilde{\theta}^\delta\rangle_X \quad(\undertilde{\theta}^\delta \in \hat{X}^\delta).
$$
Then 
$$
\langle G'\lambda^\delta, G'\undertilde{\lambda}^\delta\rangle_{{\hat{X}^\delta}'}=\langle R_{\hat{X}^\delta}G'\lambda^\delta,R_{\hat{X}^\delta}G'\undertilde{\lambda}^\delta\rangle_X=(G' \undertilde{\lambda}^\delta)(\theta^\delta) \quad (\undertilde{\lambda}^\delta \in Y^\delta).
$$
By substituting the latter relation  into \eqref{eq:minresprac6}, and by adding the preceding equation which defined $\theta^\delta$, we arrive at the equivalent problem of finding $(\theta^\delta,\lambda^\delta,u^\delta) \in \hat{X}^\delta \times Y^\delta\times X^\delta$ that satisfies
$$
\langle \theta^\delta,\undertilde{\theta}^\delta\rangle_X -(G\undertilde{\theta}^\delta)(\lambda^\delta)-
(G\theta^\delta)( \undertilde{\lambda}^\delta)- (G u^\delta)(\undertilde{\lambda}^\delta)  - (G \undertilde{u}^\delta)(\lambda^\delta) = -f(\undertilde{\lambda}^\delta) 
$$
for all $(\undertilde{\theta}^\delta,\undertilde{\lambda}^\delta,\undertilde{u}^\delta) \in \hat{X}^\delta \times Y^\delta \times X^\delta$, 
or, in equivalent block-form,
\be \label{eq:minresprac8}
\left\{
\begin{alignedat}{5}
&\langle \theta^\delta,\undertilde{\theta}^\delta\rangle_X & & -(G\undertilde{\theta}^\delta)(\lambda^\delta) && &&=0 && (\undertilde{\theta}^\delta \in \hat{X}^\delta),\\
-&(G\theta^\delta)( \undertilde{\lambda}^\delta) && &&- (G u^\delta)(\undertilde{\lambda}^\delta) &&=- f(\undertilde{\lambda}^\delta) &\quad& (\undertilde{\lambda}^\delta \in Y^\delta),\\
& && -(G \undertilde{u}^\delta)(\lambda^\delta) && &&=0 && (\undertilde{u}^\delta  \in X^\delta).
\end{alignedat}
\right.
\ee

Notice that in comparison to \eqref{eq:minresprac3} the `difficult' scalar product $\langle\cdot,\cdot\rangle_Y$ completely disappeared from the system, and that apart from the usually `easy' scalar product $\langle\cdot,\cdot\rangle_X$, only the bilinear form $(G\cdot)(\cdot)$ has to be implemented.
To satisfy the inf-sup conditions $\gamma^\delta>0$ and $\mu^\delta>0$, the auxiliary spaces $Y^\delta$ and $\hat{X}^\delta$ have to be sufficiently large (in any case $\dim \hat{X}^\delta \geq \dim Y^\delta \geq \dim X^\delta$ is needed), which makes solving \eqref{eq:minresprac8} computationally relatively expensive. On the other hand, the implementation of the method is quite simple, whereas Proposition~\ref{prop2} shows that for `large' $Y^\delta$ and $\hat{X}^\delta$, the obtained solution will be close to the best approximation from $X^\delta$. %\rs{Ja?}
Indeed, notice that for $Y^\delta=Y$ and $\hat{X}^\delta=X$, $u^\delta$ is the best approximation to $u$ from $X^\delta$ w.r.t.~$\|\cdot\|_X$, whereas the second line in 
 \eqref{eq:minresprac8} shows that then  $\theta^\delta=u-u^\delta$.

\subsection{A posteriori error estimation} \label{sec:apost}
As is well-known, $\gamma^\delta>0$ is guaranteed by existence of a $\Pi^\delta\in \cL(Y,Y^\delta)$ with
\be \label{fortin} (G X^\delta)(\ran (\identity -\Pi^\delta))=0,
\ee
where $\gamma^\delta  \geq \|G' \Pi^\delta {G'}^{-1}\|_{\cL(X',X')}^{-1}$; and conversely, $\gamma^\delta>0$ guarantees existence of such a `Fortin interpolator', being even a projector onto $Y^\delta$, with 
$\|G' \Pi^\delta {G'}^{-1}\|_{\cL(X',X')}=1/\gamma^\delta$ (e.g.~\cite[Lemma 26.9]{70.98} or \cite[Prop.~5.1]{249.992}).

In the following let $\Pi^\delta$ be a Fortin interpolator with $\|G' \Pi^\delta {G'}^{-1}\|_{\cL(X',X')} \eqsim 1/\gamma^\delta$ (so that $\inf_{\delta} \gamma^\delta>0$ is equivalent to $\sup_\delta \|G' \Pi^\delta {G'}^{-1}\|_{\cL(X',X')}<\infty$).

\begin{proposition} With $(\theta^\delta,\lambda^\delta,u^\delta) \in \hat{X}^\delta \times Y^\delta\times X^\delta$ being the solution of 
\eqref{eq:minresprac8}, and $\osc^\delta(f):=\|G^{-1}(\identity-\Pi^\delta)'f\|_{X}$, it holds that
\be \label{apost}
\mu^\delta \|\theta^\delta\|_X \leq \|u-u^\delta\|_X \leq \|G' \Pi^\delta {G'}^{-1}\|_{\cL(X',X')}  \|\theta^\delta\|_X +\osc^\delta(f).
\ee
\end{proposition}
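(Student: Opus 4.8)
The plan is to extract, from the three block equations in \eqref{eq:minresprac8}, an identity that expresses $u-u^\delta$ in terms of quantities involving $\theta^\delta$ and a known forcing term, and then to estimate the two sides using the Fortin projector $\Pi^\delta$ and the norm equivalence \eqref{normequiv2}. First I would read off from the third line of \eqref{eq:minresprac8} that $(G\undertilde u^\delta)(\lambda^\delta)=0$ for all $\undertilde u^\delta\in X^\delta$, i.e.\ $\lambda^\delta$ is $G(X^\delta)$-orthogonal (in the $Y$-duality sense) --- this is the Galerkin orthogonality of the method. Next, from the first line, $\langle\theta^\delta,\undertilde\theta^\delta\rangle_X=(G\undertilde\theta^\delta)(\lambda^\delta)=(G'\lambda^\delta)(\undertilde\theta^\delta)$ for all $\undertilde\theta^\delta\in\hat X^\delta$, so $\theta^\delta=R_{\hat X^\delta}G'\lambda^\delta$, which immediately gives $\|\theta^\delta\|_X=\|G'\lambda^\delta\|_{{\hat X^\delta}'}$; combined with \eqref{normequiv2} this yields $\mu^\delta\|G'\lambda^\delta\|_{X'}\le\|\theta^\delta\|_X\le\|G'\lambda^\delta\|_{X'}$.

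For the lower bound in \eqref{apost}, I would use the second line of \eqref{eq:minresprac8}: $(G\theta^\delta)(\undertilde\lambda^\delta)+(Gu^\delta)(\undertilde\lambda^\delta)=f(\undertilde\lambda^\delta)=(Gu)(\undertilde\lambda^\delta)$ for all $\undertilde\lambda^\delta\in Y^\delta$, hence $(G(u-u^\delta))(\undertilde\lambda^\delta)=(G\theta^\delta)(\undertilde\lambda^\delta)$ on $Y^\delta$. Testing with $\undertilde\lambda^\delta=\lambda^\delta$ and using the characterization $\|\theta^\delta\|_X=\|G'\lambda^\delta\|_{{\hat X^\delta}'}$ together with $\lambda^\delta\in Y^\delta$, I can bound $\mu^\delta\|\theta^\delta\|_X=\mu^\delta\|G'\lambda^\delta\|_{{\hat X^\delta}'}\cdot\frac{\|\theta^\delta\|_X}{\|G'\lambda^\delta\|_{{\hat X^\delta}'}}$ --- more directly, since $G$ is an isometry $X\to Y'$ in the optimal test norm, $\|u-u^\delta\|_X=\|G(u-u^\delta)\|_{Y'}\ge$ the discretized dual norm $\ge|(G(u-u^\delta))(\lambda^\delta)|/\|G'\lambda^\delta\|_{X'}=|(G\theta^\delta)(\lambda^\delta)|/\|G'\lambda^\delta\|_{X'}=\|\theta^\delta\|_X^2/\|G'\lambda^\delta\|_{X'}\ge\mu^\delta\|\theta^\delta\|_X$, where the last step uses $\|G'\lambda^\delta\|_{X'}\le\frac1{\mu^\delta}\|G'\lambda^\delta\|_{{\hat X^\delta}'}=\frac1{\mu^\delta}\|\theta^\delta\|_X$.

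For the upper bound I would again use $\|u-u^\delta\|_X=\|G(u-u^\delta)\|_{Y'}$ and the fact that, since $G:X\to Y'$ is an isometry, $G(u-u^\delta)=(\identity-\Pi^\delta)'G(u-u^\delta)+{\Pi^\delta}'G(u-u^\delta)$. On the first summand, Galerkin orthogonality kills the $Gu^\delta$-part on $\ran{\Pi^\delta}'$'s complement; more precisely, $(\identity-\Pi^\delta)'G u^\delta=0$ because $u^\delta\in X^\delta$ and \eqref{fortin} gives $(GX^\delta)(\ran(\identity-\Pi^\delta))=0$, so $(\identity-\Pi^\delta)'G(u-u^\delta)=(\identity-\Pi^\delta)'Gu=(\identity-\Pi^\delta)'f$, contributing exactly $\osc^\delta(f)=\|G^{-1}(\identity-\Pi^\delta)'f\|_X$ after applying $G^{-1}$. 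On the second summand, ${\Pi^\delta}'G(u-u^\delta)$, I would write it as $G'\!\!\circ(G')^{-1}{\Pi^\delta}'G(u-u^\delta)$ and note $(G')^{-1}{\Pi^\delta}'G'=(G'\Pi^\delta(G')^{-1})'$ has the same norm as $G'\Pi^\delta(G')^{-1}$; since $G(u-u^\delta)$ agrees with $G\theta^\delta$ as a functional on $Y^\delta\supseteq\ran\Pi^\delta$, we get ${\Pi^\delta}'G(u-u^\delta)={\Pi^\delta}'G\theta^\delta$, whence $\|G^{-1}{\Pi^\delta}'G\theta^\delta\|_X\le\|G'\Pi^\delta{G'}^{-1}\|_{\cL(X',X')}\|\theta^\delta\|_X$. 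Adding the two contributions and using the triangle inequality gives the claimed upper bound.

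The main obstacle I anticipate is the bookkeeping around the adjoint identifications: keeping straight that $(\identity-\Pi^\delta)'$ acts on $Y'$, that $\Pi^\delta$ being a Fortin \emph{projector onto} $Y^\delta$ (not merely an operator into $Y^\delta$) is what lets me replace $G(u-u^\delta)$ by $G\theta^\delta$ when tested against $\ran\Pi^\delta$, and that the operator norm of $G'\Pi^\delta{G'}^{-1}$ on $\cL(X',X')$ equals that of its adjoint on $\cL(X,X)$. Once these identifications are set up carefully the estimates are short; the only genuinely substantive point is recognizing that the residual $f-Gu^\delta$ restricted to $Y^\delta$ is represented by $G\theta^\delta$ via the chain $\theta^\delta=R_{\hat X^\delta}G'\lambda^\delta$ and the second block equation, which is exactly what makes $\theta^\delta$ serve as a computable error representative.
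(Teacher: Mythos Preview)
Your argument is correct and is essentially the dual formulation of the paper's proof. For the lower bound the paper takes the supremum characterization
\[
\|\theta^\delta\|_X=\sup_{0\neq v^\delta\in Y^\delta}\frac{|(f-Gu^\delta)(v^\delta)|}{\|G'v^\delta\|_{{\hat X^\delta}'}}
\]
and applies \eqref{normequiv2}, whereas you test with the specific element $\lambda^\delta$; both yield the same bound. For the upper bound the paper splits each test function as $v=\Pi^\delta v+(\identity-\Pi^\delta)v$ and estimates $|(f-Gu^\delta)(v)|$ pointwise, while you split the residual in $Y'$ as $G(u-u^\delta)={\Pi^\delta}'G(u-u^\delta)+(\identity-\Pi^\delta)'G(u-u^\delta)$ and pull back by $G^{-1}$; via the adjoint identity $G^{-1}{\Pi^\delta}'G=(G'\Pi^\delta{G'}^{-1})'$ these are the same computation in dual form. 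One minor remark: your concern that $\Pi^\delta$ must be a projector \emph{onto} $Y^\delta$ is unnecessary---all that is used is $\ran\Pi^\delta\subset Y^\delta$ (so that $G(u-u^\delta)$ and $G\theta^\delta$ agree on $\ran\Pi^\delta$) together with the Fortin property \eqref{fortin}; idempotence and surjectivity play no role.
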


\begin{proof}
Thanks to $\theta^\delta=R_{\hat{X}^\delta} G' \lambda^\delta$, we have
\be \label{theta}
\|\theta^\delta\|_X=\sup_{0 \neq v^\delta \in Y^\delta}\frac{|(G' v^\delta)(\theta^\delta)|}{\|G' v^\delta\|_{{\hat{X}^\delta}'}}=
\sup_{0 \neq v^\delta \in Y^\delta}\frac{|(f-Gu^\delta)(v^\delta)|}{\|G' v^\delta\|_{{\hat{X}^\delta}'}}.
\ee

Using \eqref{fortin} and the left inequality in \eqref{normequiv2}, we find that for $v \in Y$,
\begin{align*}
|(f&-G u^\delta)(v)| \leq |(f-G u^\delta)(\Pi^\delta v)| +|f((\identity-\Pi^\delta)v)| \\
& \leq \|G' \Pi^\delta v\|_{X'} \sup_{0 \neq v^\delta \in Y^\delta}\frac{|(f-Gu^\delta)(v^\delta)|}{\|G' v^\delta\|_{X'}} +|(G'v)(G^{-1}(\identity -\Pi^\delta)' f)|\\
& \leq \|G' \Pi^\delta v\|_{X'} \sup_{0 \neq v^\delta \in Y^\delta}\frac{|(f-Gu^\delta)(v^\delta)|}{\|G' v^\delta\|_{{\hat{X}^\delta}'}} +|(G'v)(G^{-1}(\identity -\Pi^\delta)' f)|\\
& \leq \Big(\|G' \Pi^\delta {G'}^{-1}\|_{\cL(X',X')} \|\theta^\delta\|_X+\|G^{-1}(\identity -\Pi^\delta)' f\|_{X}
\Big) \|G' v\|_{X'}.
\end{align*}
From $\sup_{0 \neq v \in Y}\frac{|(f-Gu^\delta)(v)|}{\|G' v\|_{X'}}=\|u-u^\delta\|_X$, we conclude the right inequality in \eqref{apost}.
 
 The left inequality in \eqref{apost} follows from the right inequality in  \eqref{normequiv2} and again \eqref{theta}.
\end{proof}

\begin{remark}[Bounding the oscillation term]
By taking $\Pi^\delta$ to be the Fortin projector with $\|G' \Pi^\delta {G'}^{-1}\|_{\cL(X',X')}=1/\gamma^\delta$, we find that
\begin{align*}
&\osc^\delta(f)=\sup_{0 \neq v \in Y} \frac{|((\identity-\Pi^\delta)'f)(v)|}{\|G' v\|_{X'}}=
\sup_{0 \neq v \in Y} \frac{|f((\identity-\Pi^\delta)v)|}{\|G' v\|_{X'}}\\
&\new{=}\sup_{0 \neq v \in Y} \inf_{w^\delta \in X^\delta} \frac{|G(u-w^\delta)((\identity-\Pi^\delta)v)|}{\|G' v\|_{X'}}=
\sup_{0 \neq v \in Y} \inf_{w^\delta \in X^\delta} \frac{|(G'(\identity-\Pi^\delta)v)(u-w^\delta)|}{\|G' v\|_{X'}}
\end{align*}
and so $\osc^\delta(f) \leq \frac{1}{\gamma^\delta} \inf_{w^\delta \in X^\delta}\|u-w^\delta\|_X$.

Even better is when $Y^\delta$ is selected such that it allows for the construction of a (uniformly bounded) Fortin interpolator such that
$\osc^\delta(f)$ is of higher order than $\inf_{w^\delta \in X^\delta}\|u-w^\delta\|_X$.
Then, in any case asymptotically, besides being efficient the estimator $\|\theta^\delta\|_X$  is also reliable.
\end{remark}

The derivation of the a posteriori error estimator in this subsection is similar to that in \cite{35.93556} specialized to the use of the optimal test norm on $Y$.
Modifications were needed because of the replacement on $Y^\delta$ of the optimal test norm $\|G'\cdot\|_{X'}$ by $\|G'\cdot\|_{{\hat{X}^\delta}'}$, and the introduction of the extra variable $\theta^\delta$.
In \cite{35.93556} the $Y$-norm of the approximate Riesz-lift $\lambda^\delta \in Y^\delta$ of the residual $f - G u^\delta$ was used as the a posteriori error estimator, whereas we use the $X$-norm of the approximate error $\theta^\delta \in \hat{X}^\delta$.
Local $X$-norms of $\theta^\delta \in \hat{X}^\delta$ will turn out to be effective for driving an adaptive finite element method.

\section{Applications}  \label{sec:4}
\subsection{Model elliptic second order boundary value problem} \label{sec:model}
On a bounded Lipschitz domain $\Omega \subset \R^d$, where $d \geq 2$, and closed $\Gamma_D, \Gamma_N \subset \partial\Omega$, with $\Gamma_D \cup \Gamma_N =\partial\Omega$ and $|\Gamma_D \cap \Gamma_N|=0$, consider the following elliptic second order boundary value problem
\begin{equation} \label{bvp}
 \left\{
\begin{array}{r@{}c@{}ll}
-{\rm div}\, A \nabla u+ B u &\,\,=\,\,& g &\text{ on } \Omega,\\
u &\,\,=\,\,& h_D &\text{ on } \Gamma_D,\\
\vec{n}\cdot A \nabla u &\,\,=\,\,& h_N &\text{ on } \Gamma_N,
\end{array}
\right.
\end{equation}
 where $B \in \cL(H^1(\Omega),L_2(\Omega))$, and $A(\cdot)=A(\cdot)^\top \in L_\infty(\Omega)^{d\times d}$ satisfies $\xi^\top A(\cdot) \xi \eqsim \|\xi\|^2$ ($\xi \in \R^d$).
We assume that the matrix $A$, and the first order operator $B$ are such that
$$
w\mapsto (v \mapsto \int_\Omega A \nabla w \cdot \nabla v +B w \, v \,dx)
\in \Lis\big(H^1_{0,\Gamma_D}(\Omega),H^1_{0,\Gamma_D}(\Omega)'\big).\,\,\,\footnotemark[3]
$$
\footnotetext[3]{When $\Gamma_D=\emptyset$, it can be needed to replace $H^1_{0,\Gamma_D}(\Omega)=H^1(\Omega)$ by $H^1(\Omega)/\R$. For simplicity, we do not consider this situation.}%

\subsection{Well-posed variational formulations} \label{sec:wellposed}
In \cite{249.96, 204.19}, the following variational formulations \ref{first}--\ref{fourth} of \eqref{bvp} have been shown to be well-posed. From the formulations given, one easily derives the expressions for `$G$', `$u$', `$f$', `$X$', and `$Y$'. Implicitly we will assume that the data $g$, $h_D$, and $h_N$ are such that `$f$' is in dual of `$Y$'.
In the case of homogeneous essential boundary conditions, the formulations \ref{first}--\ref{third} can be simplified by incorporating such conditions in the definition of the domain $X$, which option we do not consider here.

\begin{mylist}
\item({\bf 2nd order weak formulation}) \label{first} Find $u \in H^1(\Omega)$ such that
$$
\int_\Omega A \nabla u \cdot \nabla v_1 +B u \, v_1 \,dx+\int_{\Gamma_D} u v_2\,ds=g(v_1)+\int_{\Gamma_N} h_N v_1\,ds+ \int_{\Gamma_D} h_D v_2\,ds
$$
for all $(v_1,v_2)\in H^1_{0,\Gamma_D}(\Omega)\times \widetilde{H}^{-\frac12}(\Gamma_D)$.
\item({\bf 1st order mild formulation}) \label{second}
Find $(\vec{p},u) \in H(\divv;\Omega) \times H^1(\Omega)$ such that
\begin{align*}
\int_\Omega (\vec{p}-A \nabla u)\cdot \vec{v}_1+(B u-\divv \vec{p}) v_2\,dx&+\int_{\Gamma_D} u v_3\,ds+\int_{\Gamma_N} \vec{p}\cdot\vec{n} \,v_4\,ds\\
&=\int_\Omega g v_2\,dx+\int_{\Gamma_D} h_D v_3\,ds+\int_{\Gamma_N} h_N v_4 \,ds
\end{align*}
for all $(\vec{v}_1,v_2,v_3,v_4) \in L_2(\Omega)^d \times L_2(\Omega)\times \widetilde{H}^{-\frac12}(\Gamma_D)\times H_{00}^{\frac12}(\Gamma_N)$.
\item({\bf 1st order mild-weak formulation}) \label{third}
Find $(\vec{p},u) \in L_2(\Omega)^d \times H^1(\Omega)$ such that
\begin{align*}
\int_\Omega (\vec{p}-A \nabla u)\cdot \vec{v}_1+\vec{p}\cdot \nabla v_2&+B u\,v_2\,dx+\int_{\Gamma_D} u v_3\,ds\\
&= g(v_2)+\int_{\Gamma_N} h_N v_2 \,ds+\int_{\Gamma_D} h_D v_3\,ds
 \end{align*}
for all $(\vec{v}_1,v_2,v_3) \in L_2(\Omega)^d \times H^1_{0,\Gamma_D}(\Omega) \times \widetilde{H}^{-\frac12}(\Gamma_D)$.
\item({\bf 1st order ultra-weak formulation})  \label{fourth}
Assuming $Bw=\vec{b}\cdot \nabla w+cw$ for some $\vec{b} \in L_\infty(\Omega)^d$ and $c \in L_\infty(\Omega)$,
find $(\vec{p},u) \in L_2(\Omega)^d \times L_2(\Omega)$ such that
\begin{align*}
\int_\Omega A^{-1} \vec{p}\cdot \vec{v}_1+u \divv \vec{v}_1 +(\vec{b}\cdot A^{-1} \vec{p}&+c u) v_2+\vec{p}\cdot\nabla v_2\,dx\\
&=
\int_{\Gamma_D} h_D \vec{v}_1 \cdot \vec{n} \, ds+g(v_2)+\int_{\Gamma_N} h_N v_2\,ds
\end{align*}
for all $(\vec{v}_1,v_2) \in H_{0,\Gamma_N}(\divv;\Omega) \times H^1_{0,\Gamma_D}(\Omega)$.
\end{mylist}

\subsection{Finite element discretizations and verification of the uniform inf-sup conditions $\inf_{\delta} \gamma^\delta>0$ and $\inf_{\delta} \mu^\delta>0$} \label{sec:fem}
We assume that $\Omega \subset \R^d$ is a polytope, and let $\{\tria^\delta\}_\delta$ be a family of conforming, uniformly shape regular partitions of $\Omega$ into (closed) $d$-simplices.
With $\cF(\tria^\delta)$ we denote the set of (closed) facets of $K \in  \tria^\delta$.
We assume that $\Gamma_D$, and thus $\Gamma_N$,  is the union of some $e \in \cF(\tria^\delta)$.
We set $\cF_D^\delta:=\{e \in \cF(\tria^\delta)\colon e \subset \Gamma_D\}$, with a similar definition of $\cF_N^\delta$.

For $p \in \N_0$ and $k \in \N_0 \cup \{-1\}$, with $\cS_p^k(\tria^\delta)$ we denote the space of all $C^k$-piecewise polynomials of degree $p$ w.r.t.~$\tria^\delta$.
Spaces  $\cS_p^k(\cF_D^\delta)$ and $\cS_p^k(\cF_N^\delta)$ are defined analogously.

We take $A=\identity$, although the arguments given below apply equally when $A$ is piecewise constant w.r.t.~$\tria^\delta$.
For convenience, we take $B=0$, but the case of $B$ being a PDO of first order with piecewise constant coefficients w.r.t.~$\tria^\delta$ poses no additional difficulties.

For the examples \ref{first}--\ref{fourth} from Sect.~\ref{sec:wellposed} and $d \geq 2$, below 
we discuss the choice of spaces $X^\delta$, $Y^\delta$ and $\hat{X}^\delta$ for the uniform inf-sup conditions to be satisfied.

\begin{mylist}
\item({\bf 2nd order weak formulation}) 
As shown in \cite[\S4.1]{204.19}, for $p \geq 1$ and $X^\delta:=\cS_p^0(\tria^\delta)$,  the choice $Y^\delta:=\big(\cS^0_{p+d-1}(\tria^\delta) \cap H^1_{0,\Gamma_D}(\Omega)\big) \times \cS_p^{-1}(\cF_{\new{D}}^\delta)$ gives $\inf_{\delta} \gamma^\delta>0$. 
\new{Analogous arguments that were used to prove $\inf_{\delta} \gamma^\delta>0$ in \cite[Prop.~4.1]{204.19} show
that $\hat{X}^\delta:=\cS_{p+2d-2}^0(\tria^\delta)$
ensures $\inf_{\delta} \mu^\delta>0$.}
To guarantee that data-oscillation is of higher order, in the definition of $Y^\delta$ one has to replace $\cS^0_{p+d-1}(\tria^\delta)$ by $\cS^0_{p+d}(\tria^\delta)$, and consequently to take $\hat{X}^\delta:=\cS_{p+2d-1}^0(\tria^\delta)$. 
\item({\bf 1st order mild formulation}) As follows from \cite[\S4.2]{204.19}, for $p \geq 1$ and $X^\delta:=\RT_{p-1}(\tria^\delta) \times \cS_p^0(\tria^\delta)$, the choice $Y^\delta:=\cS_p^{-1}(\tria^\delta)^d \times \cS_{p-1}^{-1}(\tria^\delta) \times \new{\cS_p^{-1}(\cF_D^\delta)} \times \new{\big(\cS_{d+p-1}^{0}(\cF_N^\delta)\cap H^1_0(\Gamma_N)\big)}$ gives 
$\inf_{\delta} \gamma^\delta>0$, where data-oscillation is of higher order.
\new{Based on our empirical investigations in \cite{204.19}} of the inf-sup constant $\gamma^\delta$ with the 1st order ultra-weak formulation, we conjecture that, for $d=2$, with $\hat{X}^\delta:=\RT_{p+d-1}(\tria^\delta) \times \cS_{p+d-1}^0(\tria^\delta)$ it holds that $\inf_{\delta} \mu^\delta>0$.
\item({\bf 1st order mild-weak formulation}) As can be deduced from \cite[\S4.3]{204.19}, for $p \geq 1$ and $X^\delta:=\cS^{-1}_{p-1}(\tria^\delta)^d \times \cS_p^0(\tria^\delta)$,
the choice $Y^\delta:=\cS_{p-1}^{-1}(\tria^\delta)^d \times \big(\cS^0_{p+d-1}(\tria^\delta) \cap H^1_{0,\Gamma_D}(\Omega)\big) \times \cS_p^{-1}(\cF^\delta_{\new{D}})$
gives $\inf_{\delta} \gamma^\delta>0$, and a data-oscillation that is of higher order.
Now by taking $\hat{X}^\delta:=\cS^{-1}_{p+d-2}(\tria^\delta)^d \times \cS_{p+d}^0(\tria^\delta)$ one guarantees $\inf_{\delta} \mu^\delta>0$.
\item({\bf 1st order ultra-weak formulation}) As shown in \cite[\S4.4]{204.19}, for $p=0$ the choice $X^\delta:=\cS^{-1}_p(\tria^\delta)^d \times \cS^{-1}_p(\tria^{\new{\delta}})$ and $Y^\delta:=\big(\RT_p(\tria^{\delta}) \times \cS^0_{d+p}(\tria^\delta)\big) \cap Y$, where $Y=H_{0,\Gamma_N}(\divv;\Omega) \times H^1_{0,\Gamma_D}(\Omega)$ gives 
$\inf_{\delta} \gamma^\delta>0$. Numerical experiments indicate that the same holds true for  $p \in \{1,2,3,4\}$ and $d=2$.
This choice of $Y^\delta$ does not guarantee that data oscillation is of higher order, and it appeared that the a posteriori error estimator is not reliable. This problem was solved by taking 
$Y^\delta:=\big(\RT_{p+1}(\tria^{\delta}) \times \cS^0_{d+p+1}(\tria^\delta)\big) \cap Y$.
Since for this formulation $X=L_2(\Omega)^d \times L_2(\Omega) \simeq X'$, there is no need to introduce the additional variable $\theta^\delta$, and so to select a space $\hat{X}^\delta$.
The pair $(\lambda^\delta,u^\delta) \in X^\delta \times Y^\delta$ can be efficiently solved from \eqref{eq:minresprac5}.
Also for this example a remaining difference with \cite{204.19} is that we use the optimal test norm on $Y$ instead of the canonical norm.
\end{mylist}

\section{Numerical experiments}  \label{sec:5}
On a rectangular domain $\Omega = (-1,1)\times (0,1)$ with  Neumann and Dirichlet boundaries $\Gamma_N=[-1,0]\times\{0\}$ and $\Gamma_D=\overline{\partial \Omega \setminus \Gamma_N}$, for $g \in H^1_{0,\Gamma_D}(\Omega)'$, $h_D \in H^{\frac12}(\Gamma_D)$, and $h_N \in H^{-\frac12}(\Gamma_N)$,
we consider the Poisson problem of finding $u \in H^1(\Omega)$ that satisfies
$$
 \left\{
\begin{array}{r@{}c@{}ll}
-\Delta u&\,\,=\,\,& g &\text{ on } \Omega,\\
u &\,\,=\,\,& h_D &\text{ on } \Gamma_D,\\
\nabla u \cdot \vec{n}&\,\,=\,\,& h_N &\text{ on } \Gamma_N.
\end{array}
\right.
$$
We prescribe the solution $u(r,\theta) := r^{\frac12} \sin \frac{\theta}{2}$ in polar coordinates, and determine the data correspondingly. Then $g=0$, $h_N=0$, and $h_D=0$ on $[0,1]\times\{0\}$, but $h_D \neq 0$ on the remaining part of $\Gamma_D$.
It is known that $u \in H^{\frac32-\eps}(\Omega)$ for all $\eps>0$, but $u \not\in H^{\frac32}(\Omega)$ (\cite{77}).

We consider above problem in the first order mild-weak formulation. Compared to the second order formulation, this first order formulation does not require additional smoothness conditions on the data. Furthermore, the norms for solution and flux variables are balanced, in the sense that given some regularity of the solution, both variables can qualitatively be equally well approximated by finite element functions.
For this formulation the Neumann boundary condition is a natural one, but the Dirichlet  boundary condition is essential, and is therefore imposed by an additional variational equation.

We consider a family of conforming triangulations $\{\mathcal{T}^\delta\}_{\delta}$ of $\Omega$, where each triangulation is created using newest vertex bisections starting from an initial triangulation that consists of 8 triangles created by first cutting $\Omega$ along the y-axis and then cutting the resulting two squares along their diagonals. The interior vertex of the initial triangulation of both squares are labelled as the `newest vertex' of all four triangles in both squares.
Following Sect.~\ref{sec:fem}, given some polynomial degree $p \geq 1$, we set
\begin{align*}
X^\delta&:=\cS^{-1}_{p-1}(\tria^\delta)^2 \times \cS_p^0(\tria^\delta),\\
Y^\delta&:=\cS_{p-1}^{-1}(\tria^\delta)^2 \times \big(\cS^0_{p+1}(\tria^\delta) \cap H^1_{0,\Gamma_D}(\Omega)\big) \times \cS^{-1}_{p}(\cF^\delta_{\new{D}}),\\
\hat{X}^\delta&:=\cS^{-1}_{p}(\tria^\delta)^2 \times \cS_{p+2}^0(\tria^\delta).
\end{align*}

With $(G(\vec{p}, u))(\vec{v}_1,v_2,v_3) := \int_\Omega (\vec{p}- \nabla u)\cdot \vec{v}_1+\vec{p}\cdot \nabla v_2\,dx+\int_{\Gamma_D} u v_3\,ds$, our practical MINRES method computes $(\vec{\theta}_1^\delta, \theta_2^\delta,  \vec{\lambda}_1^\delta,\lambda_2^\delta,\lambda_3^\delta,\vec{p}^\delta, u^\delta) \in \hat{X}^\delta\times Y^\delta\times X^\delta$
such that 
\begin{align*}
\langle (\vec{\theta}_1^\delta, \theta_2^\delta), (\undertilde{\vec{\theta}}_1^\delta, \undertilde{\theta}_2^\delta) \rangle_{L_2(\Omega)^2\times H^1(\Omega)}& - (G(\undertilde{\vec{\theta}}_1^\delta, \undertilde{\theta}_2^\delta)) ( \vec{\lambda}_1^\delta,\lambda_2^\delta,\lambda_3^\delta)
- (G(\vec{\theta}_1^\delta, \theta_2^\delta)(\undertilde{\vec{\lambda}}_1^\delta,\undertilde{\lambda}_2^\delta,\undertilde{\lambda}_3^\delta) \\
& - (G(\vec{p}^\delta, u^\delta))(\undertilde{\vec{\lambda}}_1^\delta,\undertilde{\lambda}_2^\delta,\undertilde{\lambda}_3^\delta) - (G(\undertilde{\vec{p}}^\delta, \undertilde{u}^\delta))(\vec{\lambda}_1^\delta,\lambda_2^\delta,\lambda_3^\delta) \\
	= -g(\undertilde{\lambda}_2^\delta)&-\int_{\Gamma_N} h_N \undertilde{\lambda}_2^\delta \,ds-\int_{\Gamma_D} h_D \undertilde{\lambda}_3^\delta\,ds =: -f(\undertilde{\lambda}_2^\delta,\undertilde{\lambda}_3^\delta)
\end{align*}
for all $(\undertilde{\vec{\theta}}_1^\delta, \undertilde{\theta}_2^\delta, \undertilde{\vec{\lambda}}_1^\delta,\undertilde{\lambda}_2^\delta,\undertilde{\lambda}_3^\delta,
\undertilde{\vec{p}}^\delta, \undertilde{u}^\delta) \in \hat{X}^\delta\times Y^\delta\times X^\delta$.

The method comes with a built-in a posteriori error estimator $\cE(\vec{p}^\delta,u^\delta,f):=\sqrt{\sum_{T \in \tria^\delta} \|\vec{\theta}_1^\delta\|_{L_2(T)^2}^2+\|\theta_2^\delta\|_{H^1(T)}^2}$ (see Sect.~\ref{sec:apost}), which is efficient and, because the data-oscillation term is of higher order than the best approximation error, in any case asymptotically reliable.

For $p \in \{1,2,3\}$ we performed numerical experiments with uniform and adaptively refined triangulations. Concerning the latter, we have used the element-wise error indicators $\sqrt{\|\vec{\theta}_1^\delta\|_{L_2(T)^2}^2+\|\theta_2^\delta\|_{H^1(T)}^2}$ to drive an AFEM with D\"{o}rfler marking with marking parameter $\theta=0.6$.
The results given in Figure~\ref{fig1} show that for uniform refinements increasing $p$ does not improve the order of convergence, because it is limited by the low regularity of the solution in the hilbertian Sobolev scale. However, we observe that the adaptive routine attains the best possible rates allowed by the order of approximation of $X^\delta$.

\begin{figure}[h!]
\hspace*{0cm}
\begin{subfigure}{0.5\textwidth}
\centering
\includegraphics[width=\linewidth]{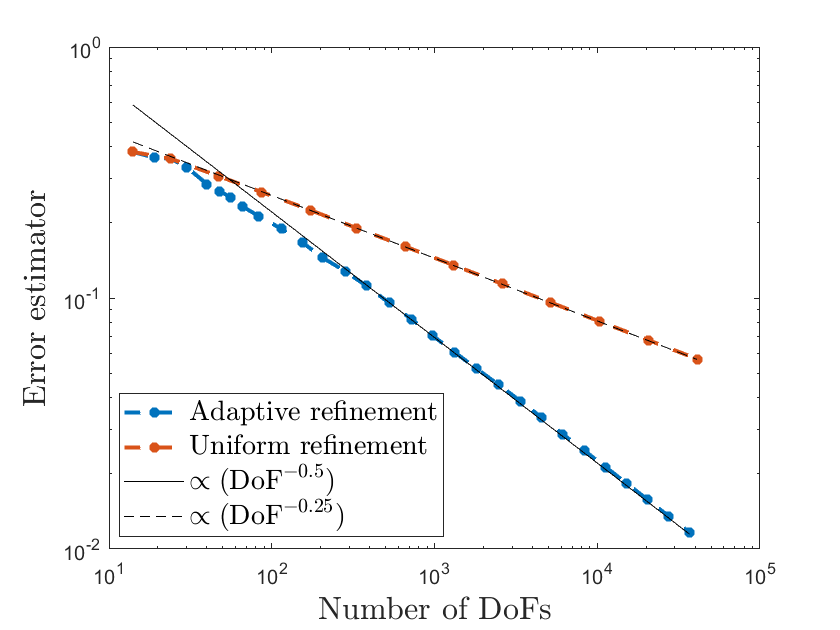}

\end{subfigure}\hspace*{0cm}%
\begin{subfigure}{0.5\textwidth}
\centering
\includegraphics[width=\linewidth]{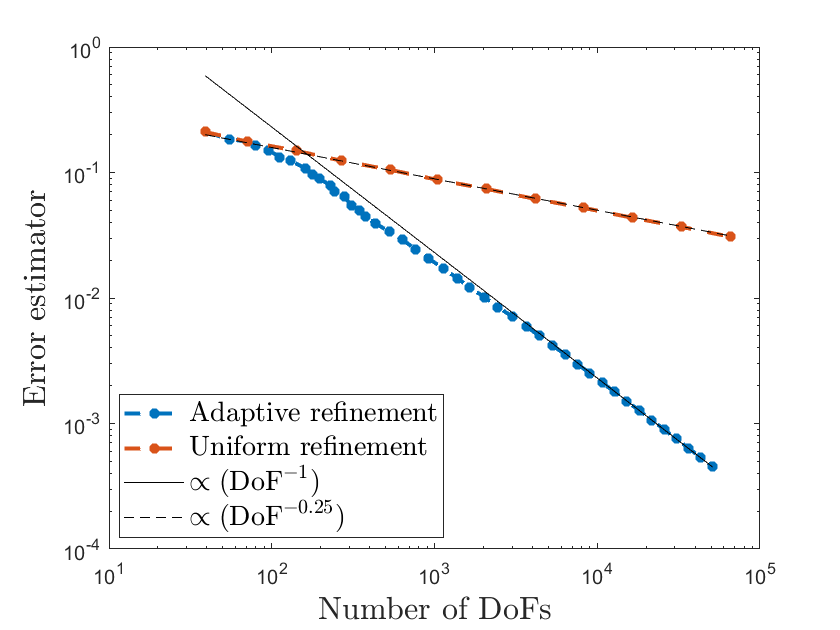}
\end{subfigure}

\begin{subfigure}{0.5\textwidth}
\centering
\includegraphics[width=\linewidth]{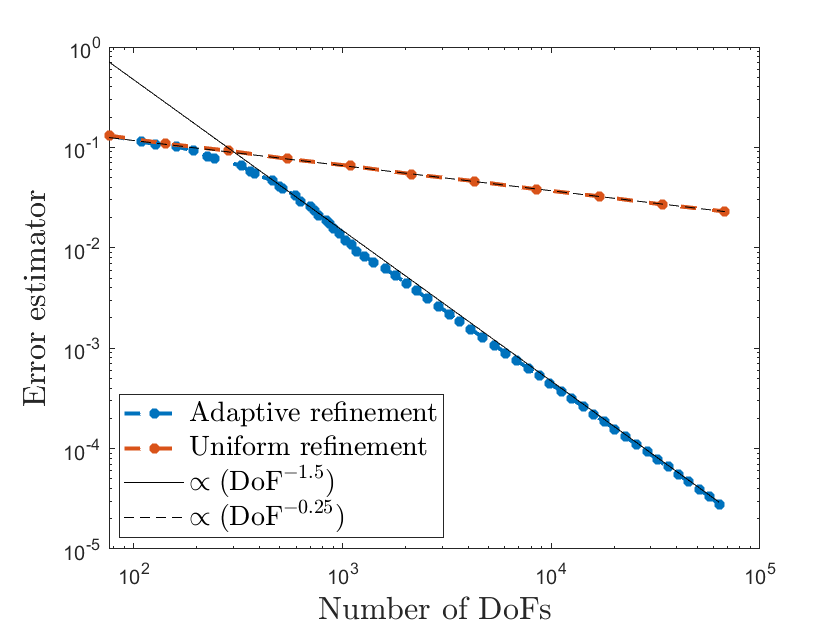}

\end{subfigure}\hspace*{0cm}%
\begin{subfigure}{0.5\textwidth}
\centering
\includegraphics[width=\linewidth]{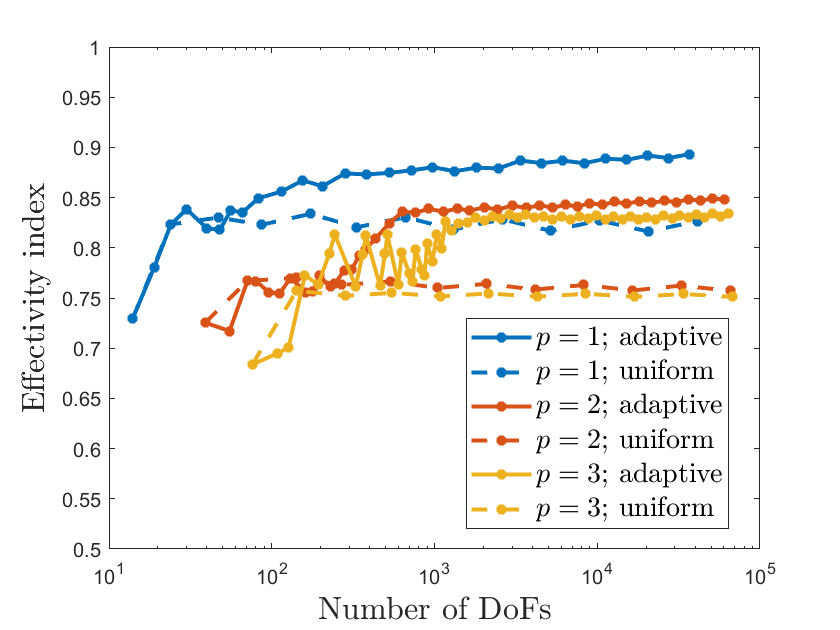}
\end{subfigure}
\caption{Number of DoFs in $X^\delta$ vs.~$\cE(\vec{p}^\delta,u^\delta,f)$:
 left-upper: $p=1$, right-upper: $p=2$, left-bottom: $p=3$. \\
 Right-bottom: number of DoFs in $X^\delta$ vs. effectivity index $\cE(\vec{p}^\delta,u^\delta,f)/\sqrt{\|\vec{p}-\vec{p}^\delta\|_{L_2(\Omega)^2}^2+\|u-u^\delta\|_{H^1(\Omega)}^2}$ .}
\label{fig1}
\end{figure}

\subsection*{Acknowledgement} The author is indebted to Harald Monsuur for the computation of the numerical results.

\newcommand{\etalchar}[1]{$^{#1}$}

%\bibliographystyle{halpha}
%\bibliography{../../ref}
\end{document}